\newtheorem{theorem}{Theorem}
\newtheorem{proposition}{Proposition}
\newtheorem{corollary}{Corollary}
\newtheorem{definition}{Definition}
\newtheorem{example}{Example}
\begin{document}

\title{On the boundary as an $x$-geodominating set in graphs}

\author[almeria]{J. C\'aceres\fnref{fn1}}
\ead{jcaceres@ual.es}

\author[almeria]{M. Morales\fnref{fn2}}
\ead{mmorale@ual.es}

\author[almeria]{M.L. Puertas\corref{cor1}\fnref{fn1}}
\ead{mpuertas@ual.es}

\address[almeria]{Department of Mathematics, University of Almer\'{\i}a, (Spain)}

\cortext[cor1]{Corresponding author}
\fntext[fn1]{Partially supported by J. Andaluc\'{\i}a FQM-305, EUROCORES programme EuroGIGA - ComPoSe IP04.}
\fntext[fn2]{Partially supported by J. Andaluc\'{\i}a FQM-305.}

\begin{abstract}
Given a graph $G$ and a vertex $x\in V(G)$, a vertex set $S \subseteq V(G)$ is an $x$-geodominating set of $G$ if each vertex $v\in V(G)$ lies on an $x-y$ geodesic for some element $y\in  S$. The minimum cardinality of an $x$-geodominating set of $G$ is defined as the $x$-geodomination number of $G$, $g_x(G)$, and an $x$-geodominating set of cardinality $g_x(G)$ is called a $g_x$-set and it is known that it is unique for each vertex $x$. We prove that, in any graph $G$, the $g_x$-set associated to a vertex $x$ is the set of boundary vertices of $x$, that is $\partial(x)= \{ v \in V(G) : \forall w \in N(v): d(x,w) \leq d(u, v)\}$. This characterization of $g_x$-sets allows to deduce, on a easy way, different properties of these sets and also to compute both $g_x$-sets and $x$-geodomination number $g_x(G)$, in graphs obtained using different graphs products: cartesian, strong and lexicographic.
\end{abstract}

\begin{keyword}
graphs \sep $x$-geodomination \sep boundary vertices \sep product-type operations
\end{keyword}

\maketitle

\section{Introduction}

Although convexity notions in graph theory have been traditionally studied from an abstract point of view, very recently one can found in the literature a growing interest in computational aspects of the field. Particularly, the geodetic closure is analyzed in~\cite{CMOP,DPRS} as a tool for reconstructing the entire graph or a certain vertex subset from only a few points in it, similarly to the Euclidean case. Recall that, given a graph $G$ and the interval $I[u,v]$ between two vertices $u,v\in V(G)$ is the set of vertices lying in some shortest path between $u$ and $v$ and for a vertex set $S\subseteq V(G)$, the {geodetic closure} of $S$ is $I(S)=\bigcup_{u,v\in S} I[u,v]$ (see \cite{CHZ}). Also a vertex set $S$ is called geodetic if $I(S)=V(G)$.

One of the obstacles to compute $I(S)$ may be that the worst-case complexity of finding all the shortest paths among all the vertices in $S$ is $O(n|S|)$. However, it is well-known that for some particular cases it is not necessary to explore all the pairs of vertices. In the case of trees, for instance, it is enough to consider shortest paths between one fixed vertex in the geodetic set and the rest of vertices in it. So the computational cost is lower than in the general case. This idea is collected in \cite{ST1}, and developed in \cite{ST2}, where authors go over the path between a fixed vertex and the rest of the vertices in the graph.

\begin{definition} \cite{ST1}
A vertex $y$ in a connected graph $G$ is said to $x$-geodominates a vertex $u$ if $u$ lies on an $x-y$ geodesic. A set of vertices of $G$ is an $x$-geodominating set if each vertex $v\in V(G)$ is $x$-geodominated by some element of $S$. The minimum cardinality of an $x$-geodominating set of $G$ is defined as the $x$-geodomination number of $G$, denoted by $g_x(G)$ or $g_x$. An $x$-geodominating set of cardinality $g_x(G)$ is called a $g_x$-set.
\end{definition}

In \cite{ST1} authors prove that there exists an unique $g_x$-set, for each vertex $x$ of a graph $G$. This property leads to the obvious question of describe it. Our goal is to find a description using local distance properties, so it can be computed in an easy way. As a first idea we have in mind the behavior of leaves of trees. In order to rebuild the tree using the geodetic interval operation, it is enough to consider shortest paths between one fixed vertex and all the leaves. So leaves of a tree are a $x$-geodominating set, for any vertex $x$ in the tree. In \cite{ST1} it is shown that every simplicial vertex of $G$ belongs to the $g_x$-set for any vertex $x$ in $G$, so it is natural to wonder if the simplicial vertex set is an $x$-geodominating set. Of course the answer is affirmative for trees, but it is negative, for instance, in interval graphs, which is a family of perfect graphs with a simple structure. The graph in Figure~\ref{extremos} is an interval graph with simplicial vertex set $\{ x,u,v,w\}$, which it is not a $z$-geodominating set, for any vertex $z$ in the graph.

\begin{figure}[ht]
  \begin{centering}
    \includegraphics[width=0.25\textwidth]{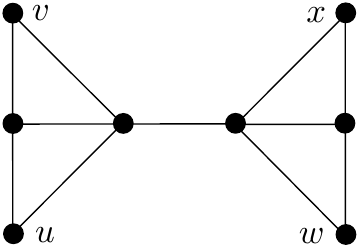}

  \caption{Simplicial vertices are not an $x$-geodominating set.}
  \label{extremos}
  \end{centering}
\end{figure}

We turn our attention to a different set. Given a graph $G$, the boundary of a vertex $x$ in $G$ is the set $\partial(x)= \{ v \in V(G) : \forall w \in N(v): d(x,w) \leq d(u, v)\}$ (see \cite{CEJZ}).
In~\cite{HMPS} it is shown that the boundary of a vertex, union the vertex itself, is a geodetic set so following this idea we prove our main result.

\section{Main result}

In this Section we prove that, in any graph $G$, the unique $g_x$-set associated to a vertex $x$ is the set $\partial(x)$ of boundary vertices of $x$.

\begin{proposition}\label{proposition}
Let $x$ be a vertex of a graph $G$. Then $\partial (x)$, the boundary of $x$, is an $x$-geodominating set.
\end{proposition}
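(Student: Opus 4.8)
The plan is a straightforward hill‑climbing argument on the function $v\mapsto d(x,v)$. I will use two elementary facts. First, for any edge $ab$ of $G$ one has $|d(x,a)-d(x,b)|\le 1$; hence if $v\notin\partial(x)$, then unravelling the definition, $v$ has a neighbour $w$ with $d(x,w)>d(x,v)$, and this forces $d(x,w)=d(x,v)+1$. Second, the \emph{propagation step}: if $u$ lies on some $x$--$v$ geodesic $P$ and $w$ is a neighbour of $v$ with $d(x,w)=d(x,v)+1$, then $P$ followed by the edge $vw$ is an $x$--$w$ geodesic, and it still contains $u$ (its length $d(x,v)+1$ equals $d(x,w)$, and it inherits $u$ from $P$).

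Now fix an arbitrary $u\in V(G)$; I must produce $y\in\partial(x)$ such that $u$ lies on an $x$--$y$ geodesic. Set $v_0=u$ and, as long as $v_i\notin\partial(x)$, use the first fact to choose a neighbour $v_{i+1}$ of $v_i$ with $d(x,v_{i+1})=d(x,v_i)+1$. An easy induction shows that for every index $i$ the vertex $u$ lies on some $x$--$v_i$ geodesic: the base case holds because $u=v_0$ is an endpoint of any $x$--$v_0$ geodesic (which exists, $G$ being connected), and the inductive step is exactly the propagation step above. The sequence $d(x,v_0)<d(x,v_1)<\cdots$ is strictly increasing and bounded above by the eccentricity of $x$, which is finite since $G$ is connected, so the construction must halt; it can only halt at some $v_k\in\partial(x)$. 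Taking $y=v_k$ then proves that $u$ is $x$-geodominated by an element of $\partial(x)$, and, as a by-product, that $\partial(x)\neq\emptyset$.

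I do not expect a genuine obstacle. The two points deserving a line of care are: (i) verifying that appending a distance-increasing edge to a geodesic keeps it a geodesic and retains the witness $u$ — immediate, since both the path length and $d(x,\cdot)$ increase by one; and (ii) the termination/finiteness argument, which relies only on strict monotonicity of $d(x,v_i)$ together with finiteness of $G$. The degenerate case $u=x$ needs no special treatment, since $x$ lies on every geodesic starting at $x$, so any vertex of $\partial(x)$ (nonempty by the above) $x$-geodominates it.
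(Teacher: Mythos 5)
Your proof is correct and follows essentially the same route as the paper: starting from an arbitrary vertex, repeatedly move to a neighbour at strictly larger distance from $x$ while noting that $I[x,v_i]\subseteq I[x,v_{i+1}]$, and terminate at a boundary vertex by finiteness. Your write-up is slightly more explicit than the paper's about the propagation step and the termination argument, but there is no substantive difference.
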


\begin{proof}
Let $x$ be a vertex in a graph $G$ and consider $u\in V(G)\setminus \partial(x)$. By definition, there exist $u_1\in N(u)$ such that $d(x,u)<d(x,u_1)=d(x,u)+1$, so $u\in I[x,u_1]$. If $u_1\in \partial(x)$, we are done. If $u_1\notin \partial(x)$, there exist $u_2\in N(u_1)$ such that $d(x,u_1)<d(x,u_2)=d(x,u_1)+1$, so $I[x,u_1]\subseteq I[x,u_2]$ and $u\in I[x,u_2]$. If $u_2\in \partial(x)$, we are done and if $u_2\notin \partial(x)$, we can repeat this process, as many times as necessary, and finally find a vertex $u_k\in \partial(x)$ such that $u\in I[x,u_k]$.
\end{proof}

We can now state our main theorem.

\begin{theorem}\label{teorema}
Let $x$ be a vertex of a graph $G$. Then a set $S$ of vertices is $x$-geodominating if and only if $\partial(x)\subseteq S$.
\end{theorem}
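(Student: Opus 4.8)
The plan is to prove the two implications separately, using Proposition~\ref{proposition} for the easy direction and a minimality/uniqueness argument for the hard one.

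\smallskip

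\emph{($\Leftarrow$) Sufficiency.} Suppose $\partial(x)\subseteq S$. By Proposition~\ref{proposition}, $\partial(x)$ is an $x$-geodominating set, so every vertex $v\in V(G)$ lies on an $x$--$y$ geodesic for some $y\in\partial(x)\subseteq S$. Hence $S$ is $x$-geodominating. This direction is immediate and requires no new work.

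\smallskip

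\emph{($\Rightarrow$) Necessity.} Suppose $S$ is $x$-geodominating; I must show $\partial(x)\subseteq S$. The key point is that a boundary vertex $v\in\partial(x)$ cannot be an interior vertex of any $x$--$y$ geodesic: if $v$ lay strictly inside an $x$--$y$ geodesic, its successor $w$ on that geodesic would be a neighbour of $v$ with $d(x,w)=d(x,v)+1>d(x,v)\ge d(x,v)$... more precisely, $w\in N(v)$ with $d(x,w)>d(x,v)$, contradicting the defining property of $\partial(x)$ that $d(x,w)\le d(x,v)$ for every $w\in N(v)$. Therefore the only way $v\in\partial(x)$ can be $x$-geodominated by an element $y\in S$ is to be an endpoint of the geodesic, i.e. $y=v$ (the other endpoint being $x$, and $v\ne x$ unless trivially $v=x$, which cannot happen since $x$ has a neighbour at distance $1>0$ whenever $G$ has an edge — or handle $v=x$ as a degenerate case directly). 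Since $S$ is $x$-geodominating, it must $x$-geodominate $v$, so $v\in S$. As $v\in\partial(x)$ was arbitrary, $\partial(x)\subseteq S$.

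\smallskip

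\emph{Remark on structure and obstacles.} The one subtlety to be careful about is the degenerate situation: whether $x$ itself can lie in $\partial(x)$ (it does precisely when $x$ is isolated, giving $\partial(x)=\{x\}=V(G)$, and then $S$ must contain $x$), and the convention that $x$ $x$-geodominates itself via the trivial geodesic. Once those edge cases are dispatched, the heart of the argument is the observation that boundary vertices are "terminal" on every geodesic emanating from $x$, which forces them into $S$. Combined with sufficiency, this shows the $g_x$-set — the minimum such $S$ — equals $\partial(x)$ exactly, recovering the stated characterization.
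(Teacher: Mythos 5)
Your proposal is correct and follows essentially the same route as the paper: sufficiency is immediate from Proposition~\ref{proposition}, and necessity rests on the same key observation that a vertex of $\partial(x)$ cannot lie strictly inside an $x$--$y$ geodesic, since its successor on such a geodesic would be a neighbour strictly farther from $x$. Your extra discussion of the degenerate case $v=x$ is harmless but unnecessary, since in a connected graph with an edge $x\notin\partial(x)$, and the paper's contradiction argument (taking $v\in\partial(x)\setminus S$, so $v\neq u$ for the geodominating endpoint $u\in S$) covers it uniformly.
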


\begin{proof}
Suppose that $S$ is an $x$-geodominating set and assume that $\partial(x)\setminus S\neq \emptyset$. Then there exists $v\in \partial(x)$ such that $v\notin S$. By hypothesis $v\in I[x,u]$, for some $u\in S$ and using that $u\neq v$, there is a neighbor $w$ of $v$ in the shortest path between $x$ and $u$ such that $d(x,v)<d(x,w)=d(x,v)+1$, a contradiction with $v\in \partial(x)$. Conversely, if $\partial(x)\subseteq S$, it is clear that $\bigcup_{v\in \partial(x)} I[x,v] \subseteq \bigcup_{s\in S} I[x,s]$, so using Proposition~\ref{proposition}, $V(G)=\bigcup_{s\in S} I[x,s]$ as desired.
\end{proof}

Finally the following Corollary, that describes $g_x$-sets, can be immediately deduced from the Theorem above.

\begin{corollary}\label{corolario}
Let $x$ be a vertex of a graph $G$, then the unique $g_x$-set is $\partial(x)$ and the $x$-geodomination number is $g_x=|\partial(x)|$.
\end{corollary}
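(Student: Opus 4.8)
The plan is to derive Corollary~\ref{corolario} as a direct consequence of Theorem~\ref{teorema}, since the heavy lifting has already been done. The theorem characterizes $x$-geodominating sets $S$ exactly as those containing $\partial(x)$, so I would argue that among all such sets the one of minimum cardinality is obviously $\partial(x)$ itself: any $x$-geodominating set $S$ satisfies $\partial(x)\subseteq S$, hence $|\partial(x)|\le |S|$, and moreover $\partial(x)$ is itself $x$-geodominating by Proposition~\ref{proposition} (or equivalently by the ``if'' direction of Theorem~\ref{teorema} applied to $S=\partial(x)$). Therefore $\partial(x)$ is an $x$-geodominating set of minimum cardinality, i.e.\ a $g_x$-set, and $g_x = |\partial(x)|$.

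For uniqueness, I would invoke the same characterization once more: if $S$ is any $g_x$-set, then $S$ is in particular $x$-geodominating, so $\partial(x)\subseteq S$ by Theorem~\ref{teorema}; combined with $|S| = g_x = |\partial(x)|$ this forces $S = \partial(x)$. (Alternatively one could simply cite the uniqueness result of~\cite{ST1} mentioned in the introduction, but the argument above makes the corollary self-contained given the theorem.) Putting these two observations together yields exactly the two assertions of the corollary.

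I do not anticipate any real obstacle here — the corollary is genuinely immediate from Theorem~\ref{teorema}, and the only thing to be careful about is to state the minimality argument and the uniqueness argument cleanly rather than conflating them. Below is the short proof.

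\begin{proof}
By Proposition~\ref{proposition}, $\partial(x)$ is an $x$-geodominating set. On the other hand, by Theorem~\ref{teorema}, every $x$-geodominating set $S$ satisfies $\partial(x)\subseteq S$ and hence $|\partial(x)|\le |S|$. Thus $\partial(x)$ is an $x$-geodominating set of minimum cardinality, so it is a $g_x$-set and $g_x=|\partial(x)|$. Finally, if $S$ is any $g_x$-set, then $S$ is $x$-geodominating, so $\partial(x)\subseteq S$ by Theorem~\ref{teorema}; since $|S|=g_x=|\partial(x)|$, we conclude $S=\partial(x)$, which proves uniqueness.
\end{proof}
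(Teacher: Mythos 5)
Your proof is correct and matches the paper's intent exactly: the paper states the corollary follows immediately from Theorem~\ref{teorema}, and your argument (minimality via $\partial(x)\subseteq S$ plus Proposition~\ref{proposition}, and uniqueness forced by equal cardinalities) is precisely that immediate deduction, spelled out cleanly.
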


This description of $g_x$-sets as the boundary of vertex $x$ reveals the truly nature of this boundary set, in terms of convexity properties: it is the set needed to rebuild the graph by means the interval operation, with one fixed end. Also note that several results in \cite{ST1} describing basic properties of $g_x$-sets, for example Theorems 2.4, 2.5, 2.12 and 2.14, can be easily deduced from Corollary~\ref{corolario}.

\section{Applications: $x$-geodomination in graph products}

We recall different product operations in graphs (see \cite{IK}). We denote by $E(G)$ the set of edges of a graph $G$. Let $G$ and $H$ be two graphs, then a graph with vertex set $V(G)\times V(H)$ is called:
\begin{itemize}
\item the cartesian product of $G$ and $H$, denoted by $G\Box H$ if: $(g,h),(g',h')\in E(G\Box H)$ if and only if $gg'\in E(G), h=h'$ or $g=g', hh'\in E(H)$,
\item the lexicographic product of $G$ and $H$, denoted by  $G\circ H$ if: $(g,h),(g',h')\in E(G\circ H)$ if and only if $gg'\in E(G)$ or $g=g', hh'\in E(H)$,
\item the strong product of $G$ and $H$, denoted by $G\boxtimes H$ if: $(g,h),(g',h')\in E(G\boxtimes H)$ if and only if $gg'\in E(G), h=h'$ or $g=g', hh'\in E(H)$  or
$gg'\in E(G), hh'\in E(H)$.
\end{itemize}

In order to compute the $x$-geodominating number in a product graph, in terms of geodominating numbers on factors, it will be enough to describe the boundary of a vertex in the product graphs in terms of the boundary of factors.

\begin{proposition}\label{products}
Let $G$ and $H$ be graphs and let $g\in V(G)$ and $h\in V(H)$ be. Then:
\begin{enumerate}
\item $\partial_{G\Box H} \big((g,h)\big)=\partial_{G} (g)\times \partial_{H} (h)$.
\item $\{ g\} \times \partial_{H} (h)\subseteq \partial_{G\circ H} \big((g,h)\big)\subseteq \big(\partial_{G} (g)\times V(H)\big) \bigcup \big( \{ g\} \times \partial_{H} (h)\big)$.
\item $\partial_G(g)\times \partial_H(h)\subseteq \partial_{G\boxtimes H} \big((g,h)\big)\subseteq \big(\partial_{G} (g)\times V(H)\big) \bigcup \big(V(G)\times \partial_{H} (h)\big)$.
\end{enumerate}

\end{proposition}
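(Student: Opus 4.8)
The plan is to compute the boundary of a vertex in each product by carefully unravelling the distance formulas for the three products, since the boundary is defined purely in terms of distances. Recall that $\partial_K(v)$ consists of those $w$ such that $d_K(v,w')\le d_K(v,w)$ for every neighbour $w'$ of $w$; equivalently, no neighbour of $w$ is strictly farther from $v$ than $w$ itself. So for each product I would fix the base vertex $(g,h)$, take an arbitrary vertex $(g',h')$, describe its neighbours in the product, and check the boundary condition using the known distance formula.

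For the cartesian product, the key fact is $d_{G\Box H}\big((g,h),(g',h')\big)=d_G(g,g')+d_H(h,h')$ and the neighbours of $(g',h')$ are exactly $(g'',h')$ with $g''\in N_G(g')$ together with $(g',h'')$ with $h''\in N_H(h')$. Then $(g',h')\in\partial_{G\Box H}\big((g,h)\big)$ iff for every $g''\in N_G(g')$ we have $d_G(g,g'')\le d_G(g,g')$ and for every $h''\in N_H(h')$ we have $d_H(h,h'')\le d_H(h,h')$, which is precisely $g'\in\partial_G(g)$ and $h'\in\partial_H(h)$. This gives the equality in item 1 cleanly. For the lexicographic product I would use that $d_{G\circ H}\big((g,h),(g',h')\big)=d_G(g,g')$ when $g\ne g'$, while for $g=g'$ it equals $\min\{d_H(h,h'),2\}$ when $G$ has a vertex adjacent to $g$ appropriately, more precisely $d_{G\circ H}\big((g,h),(g,h')\big)=\min\{d_H(h,h'),2\}$ if $g$ is not isolated and $=d_H(h,h')$ otherwise; since the interesting case is $h\ne h'$ and diameters at most $2$, I would work with the fact that the $H$-coordinate contributes nothing once $g'\ne g$. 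The inclusions then follow: if $g'\in\partial_G(g)$ then moving in the $G$-direction cannot increase the distance, and one must additionally control the $H$-moves, which forces $h'\in\partial_H(h)$ when $g'=g$; this yields $\{g\}\times\partial_H(h)\subseteq\partial_{G\circ H}\big((g,h)\big)$, and the upper inclusion comes from observing that any boundary vertex must have either $g'\in\partial_G(g)$ or ($g'=g$ and $h'\in\partial_H(h)$). For the strong product I would use $d_{G\boxtimes H}\big((g,h),(g',h')\big)=\max\{d_G(g,g'),d_H(h,h')\}$, whose neighbours include all "diagonal" moves, and read off both inclusions by analysing when the max can be increased by a neighbour.

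The main obstacle I expect is the lexicographic case, item 2, because the distance formula is not additive and behaves discontinuously: distances in the $H$-fibre are capped at $2$ as soon as $g$ has a neighbour, so the boundary condition in the $H$-direction interacts in a subtle way with whether $g'=g$ or not. In particular I would need to be careful that a vertex $(g',h')$ with $g'\notin\partial_G(g)$ is never in the boundary (some $G$-neighbour of $g'$ is farther, and the corresponding product vertex is a neighbour that is strictly farther), which gives the $\partial_G(g)\times V(H)$ part of the upper bound; and that when $g'=g$ a non-boundary $h'$ is excluded, giving the $\{g\}\times\partial_H(h)$ part. The lower inclusion requires checking that for $g'=g$, $h'\in\partial_H(h)$, every neighbour of $(g,h')$ — whether an $H$-fibre move or a jump to an adjacent $G$-fibre — is at distance at most $d_H(h,h')$ from $(g,h)$; the $G$-fibre jumps land at distance $1$, which is harmless since $d_H(h,h')\ge 1$, and the $H$-fibre moves are controlled by $h'\in\partial_H(h)$ together with the cap. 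The strong-product argument is structurally similar but easier because the $\max$ formula degrades gracefully. Throughout, I would state the distance formulas for the three products as known facts from \cite{IK} rather than reprove them, and keep the case analysis organised by whether each coordinate of the neighbour changes.
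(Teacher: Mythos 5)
Your proposal follows essentially the same route as the paper: fix the base vertex, use the standard distance formulas for the three products (sum, lexicographic, max), and verify the boundary condition by a case analysis on the type of neighbour, obtaining equality for the cartesian product and the two sandwich inclusions for the lexicographic and strong products. If anything, you are slightly more careful than the paper about the lexicographic distance being capped at $2$ inside a fibre, but the argument is the same.
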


\begin{proof}
\begin{enumerate}
\item Let $(g',h')\in \partial_{G\Box H} \big((g,h)\big)$ be. We will see that $g'\in \partial_{G} (g)$ (and analogously $h'\in \partial_{H} (h)$). For each $g''\in N(g')$, it is clear that $(g'',h')\in N\big((g',h')\big)$, so $d\big((g,h),(g',h')\big)\geq d\big((g,h),(g'',h')\big)$ and using the distance properties of the cartesian product, $d(g,g')+d(h,h')\geq d(g,g'')+d(h,h')$, that implies $d(g,g')\geq d(g,g'')$, as desired.

    Conversely let $(g',h')\in \partial_{G} (g)\times \partial_{H} (h)$ and $(g'',h'')\in N\big((g',h')\big)$ be. Suppose, without lost of generality that $g'=g''$ and $h''\in N(h')$. Then, by hypothesis, $d\big((g,h),(g',h')\big)=d(g,g')+d(h,h')=d(g,g'')+d(h,h')\geq d(g,g'')+d(h,h'')=d\big((g,h),(g'',h'')\big)$, as desired.

\item Let $(g,h')\in \{ g\} \times \partial_{H} (h)$ and $(g'',h'')\in N\big((g,h')\big)$ be. If $g''\in N(g)$, using the distance properties in the lexicographic product, we obtain $d\big((g,h),(g'',h'')\big)=d(g,g'')=1\leq d(h,h')=d\big((g,h),(g,h')\big)$. On the other hand, if $g''=g$ and $h''\in N(h')$ then, by hypothesis, $d\big((g,h),(g'',h'')\big)=d(h,h'')\leq d(h,h')=d\big((g,h),(g,h')\big)$. Thus $(g,h')\in \partial_{G\circ H} \big((g,h)\big)$.

    To prove the second inclusion, let $(g',h')\in \partial_{G\circ H} \big((g,h)\big)$ be. We consider two cases: $g'\neq g$ and $g'=g$. Firstly, if $g'\neq g$, let see that $g'\in \partial_{G}(g)$. Let $g''\in N(g')$ be, then $(g'',h')\in N\big((g',h')\big)$ and, by hypothesis, $d\big((g,h),(g'',h')\big)\leq d\big((g,h),(g',h')\big)=d(g,g')$, so $d(g,g'')\leq d\big((g,h),(g'',h')\big)\leq d(g,g')$ and $g'\in \partial_{G}(g)$, as desired. Finally if $g'=g$, let see that $h'\in \partial_{H}(h)$. Let $h''\in N(h')$ be, then $(g,h'')\in N\big((g,h')\big)$ and, by hypothesis, $d\big((g,h),(g,h'')\big)\leq d\big((g,h),(g,h')\big)$, so $d(h,h'')\leq d(h,h')$ and $h'\in \partial_{H}(h)$, as desired.

\item  Let $(g',h')\in \partial_G(g)\times \partial_H(h)$ be and let see that $(g',h')\in \partial_{G\boxtimes H} \big((g,h)\big)$. To this end consider $(g'',h'')\in N\big((g',h')\big)$, and suppose that $g''\in N(g'), h'=h''$. Using the distance properties of the strong product of graphs and the hypothesis, we obtain $d\big((g,h),(g',h')\big)=\max \{ d(g,g'),d(h,h')\}=\max \{ d(g,g'),d(h,h'')\}\geq \max \{ d(g,g''),d(h,h'')\}=d\big((g,h),(g'',h'')\big)$, as desired. The reasoning is similar if $g'=g'', h''\in N(h')$. Now suppose that $g''\in N(g'), h''\in N(h')$, then $d\big((g,h),(g',h')\big)=\max \{ d(g,g'),d(h,h')\}\geq \max \{ d(g,g''),d(h,h'')\}=d\big((g,h),(g'',h'')\big)$.

    Finally let $(g',h')\in \partial_{G\boxtimes H} \big((g,h)\big)$ be and let see that either $g'\in \partial_{G} (g)$ or $h'\in \partial_{H} (h)$. Suppose that $g'\notin \partial_{G} (g)$, and let $h''\in N(h')$ be. Then there exists $g''\in N(g')$ such that $d(g,g')<d(g,g'')$ and we consider the vertex $(g'',h'')$ that belongs to $N\big((g',h')\big)$. By hypothesis, $d\big((g,h),(g',h')\big)\geq d\big((g,h),(g'',h'')\big)$, so $\max \{ d(g,g'),d(h,h')\}\geq \max \{ d(g,g''),d(h,h'')\}$. Note that condition $d(g,g')<d(g,g'')$ implies that $d(h,h')=\max \{ d(g,g'),d(h,h')\}\geq \max \{ d(g,g''),d(h,h'')\}\geq d(h,h'')$ and $h'\in \partial_{H} (h)$ as desired.
\end{enumerate}\end{proof}

The following Examples shows different situations respect to both lexicographic product and strong product.

\begin{example}

The graph in Figure~\ref{lexicographic}(a) shows the lexicographic product graph $P_3\circ P_3$. Note that  $\partial_{P_3\circ P_3}\big((a,1)\big)=\{ (c,1),(c,2),(c,3),(a,3)\}= \big(\partial_{P_3}(a)\times V(P_3)\big) \bigcup \big(\{ a\} \times \partial_{P_3}(1)\big)$.

However in the same graph (see Figure~\ref{lexicographic}(b)) we have $\partial_{P_3\circ P_3}\big((b,1)\big)=\{(b,3)\} = \{b\}\times \partial_{P_3}(1)$.

\begin{figure}[ht]
  \begin{centering}

    \subfigure[Vertex $(a,1)$ and its boundary.]{\includegraphics[width=0.32\textwidth]{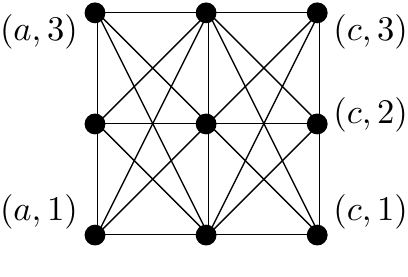}} \hspace{1.5cm}
    \subfigure[Vertex $(b,1)$ and its boundary.]{\includegraphics[width=0.19\textwidth]{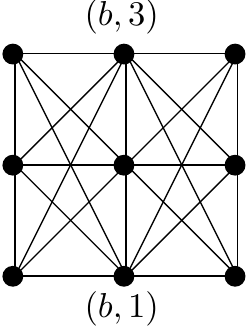}}

    \caption{Different situations happen in the lexicographic product $P_3\circ P_3$.}
  \label{lexicographic}
  \end{centering}
\end{figure}

\end{example}

\begin{example}

The strong product graph $P_3\boxtimes P_3$ is shown in Figure~\ref{strong}(a). Note that $\partial_{P_3\boxtimes P_3}\big((a,1)\big)=\{ (a,3),(b,3),(c,1),(c,2),(c,3)\} =\big(\partial_{P_3}(a)\times V(P_3)\big) \bigcup \big(V(P_3)\times \partial_{P_3}(1)\big)$.

However the graph in Figure~\ref{strong}(b) shows the strong product graph $P_3\boxtimes P_4$ and in this case $\partial_{P_3\boxtimes P_4}\big((a,1)\big)=\{ (a,4),(b,4),(c,1),(c,2),(c,4)\} \varsubsetneqq\big(\partial_{P_3}(a)\times V(P_4)\big) \bigcup \big(V(P_3)\times \partial_{P_4}(1)\big)=\{ (a,4),(b,4),(c,1),(c,2),(c,3),(c,4)\}$.
\begin{figure}[ht]
  \begin{centering}
    \subfigure[Vertex $(a,1)$ and its \newline boundary in $P_3\boxtimes P_3$.]{\includegraphics[width=0.25\textwidth]{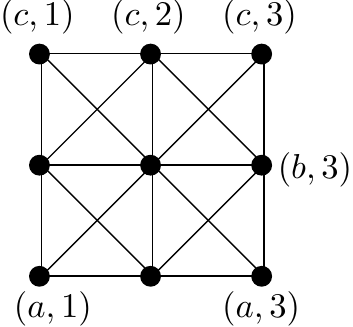}}
    \hspace{2cm}
    \subfigure[Vertex $(a,1)$ and its boundary in \newline $P_3\boxtimes P_4$.]{\includegraphics[width=0.32\textwidth]{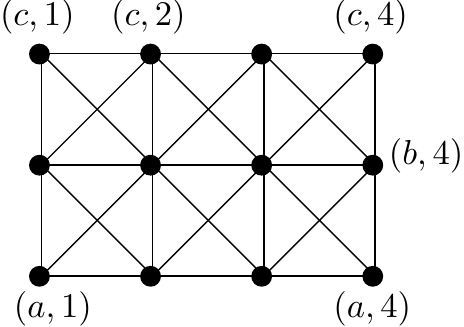}}

   \caption{Different situations happen in the strong product of graphs.}
  \label{strong}
  \end{centering}
\end{figure}

 \end{example}

The following result, about the $x$-geodomination number in graphs products, can be immediately deduced from Proposition~\ref{products}.

\begin{proposition}
Let $G$ and $H$ be graphs and let $x\in V(G)$ and $y\in V(H)$ be. Then:
\begin{enumerate}
\item $g_{(x,y)}(G\Box H)=g_x(G)g_y(H)$.
\item $g_y(H)\leq g_{(x,y)}(G\circ H)\leq g_x(G)|V(H)| + g_y(H)$.
\item $g_x(G) g_y(H) \leq g_{(x,y)}(G\boxtimes H)\leq g_x(G)|V(H)| + |V(G)|g_y(H)$.

\end{enumerate}

\end{proposition}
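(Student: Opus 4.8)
The plan is to derive all three cardinality statements directly from Proposition~\ref{products} together with Corollary~\ref{corolario}, since the latter identifies $g_{(x,y)}$ of a product with the size of the boundary of $(x,y)$ in that product. So the entire proof reduces to taking cardinalities of the set inclusions already established.

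For the cartesian product, I would write $g_{(x,y)}(G\Box H)=|\partial_{G\Box H}((x,y))|=|\partial_G(x)\times\partial_H(y)|=|\partial_G(x)|\cdot|\partial_H(y)|=g_x(G)g_y(H)$, using part~(1) of Proposition~\ref{products} for the middle equality and Corollary~\ref{corolario} at both ends. For the lexicographic product I would use part~(2): the lower bound follows from $\{x\}\times\partial_H(y)\subseteq\partial_{G\circ H}((x,y))$, giving $g_y(H)=|\{x\}\times\partial_H(y)|\le|\partial_{G\circ H}((x,y))|=g_{(x,y)}(G\circ H)$; the upper bound follows from the outer inclusion, since $|(\partial_G(x)\times V(H))\cup(\{x\}\times\partial_H(y))|\le|\partial_G(x)|\cdot|V(H)|+|\{x\}|\cdot|\partial_H(y)|=g_x(G)|V(H)|+g_y(H)$ (subadditivity of cardinality over a union suffices; one need not track the overlap). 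For the strong product I would use part~(3) in the same way: the lower bound from $\partial_G(x)\times\partial_H(y)\subseteq\partial_{G\boxtimes H}((x,y))$ gives $g_x(G)g_y(H)\le g_{(x,y)}(G\boxtimes H)$, and the upper bound from the outer union gives $g_{(x,y)}(G\boxtimes H)\le|\partial_G(x)|\cdot|V(H)|+|V(G)|\cdot|\partial_H(y)|=g_x(G)|V(H)|+|V(G)|g_y(H)$.

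There is essentially no obstacle here: the work has all been done in Proposition~\ref{products}, and this proposition is purely bookkeeping with cardinalities. The only points requiring a word of care are that in the cartesian case one needs the boundary to be an \emph{exact} product set (which is exactly what part~(1) asserts, so equality of cardinalities is genuine, not just a bound), whereas in the lexicographic and strong cases only inclusions are available, which is why those two statements are inequalities rather than equalities — and indeed the Examples above exhibit product graphs where the upper inclusion is strict, so the upper bounds cannot in general be improved to equalities by this method.

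\begin{proof}
By Corollary~\ref{corolario}, for any graph $K$ and any vertex $z\in V(K)$ we have $g_z(K)=|\partial_K(z)|$; in particular $g_{(x,y)}(K)=|\partial_K((x,y))|$ whenever $K$ is one of the product graphs under consideration.

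\emph{Cartesian product.} By Proposition~\ref{products}(1),
\[
g_{(x,y)}(G\Box H)=\bigl|\partial_{G\Box H}\bigl((x,y)\bigr)\bigr|=\bigl|\partial_G(x)\times\partial_H(y)\bigr|=|\partial_G(x)|\cdot|\partial_H(y)|=g_x(G)g_y(H).
\]

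\emph{Lexicographic product.} By Proposition~\ref{products}(2), $\{x\}\times\partial_H(y)\subseteq\partial_{G\circ H}\bigl((x,y)\bigr)$, hence
\[
g_y(H)=\bigl|\{x\}\times\partial_H(y)\bigr|\le\bigl|\partial_{G\circ H}\bigl((x,y)\bigr)\bigr|=g_{(x,y)}(G\circ H).
\]
Again by Proposition~\ref{products}(2), $\partial_{G\circ H}\bigl((x,y)\bigr)\subseteq\bigl(\partial_G(x)\times V(H)\bigr)\cup\bigl(\{x\}\times\partial_H(y)\bigr)$, so
\[
g_{(x,y)}(G\circ H)\le\bigl|\partial_G(x)\times V(H)\bigr|+\bigl|\{x\}\times\partial_H(y)\bigr|=g_x(G)|V(H)|+g_y(H).
\]

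\emph{Strong product.} By Proposition~\ref{products}(3), $\partial_G(x)\times\partial_H(y)\subseteq\partial_{G\boxtimes H}\bigl((x,y)\bigr)$, hence
\[
g_x(G)g_y(H)=\bigl|\partial_G(x)\times\partial_H(y)\bigr|\le\bigl|\partial_{G\boxtimes H}\bigl((x,y)\bigr)\bigr|=g_{(x,y)}(G\boxtimes H).
\]
Also by Proposition~\ref{products}(3), $\partial_{G\boxtimes H}\bigl((x,y)\bigr)\subseteq\bigl(\partial_G(x)\times V(H)\bigr)\cup\bigl(V(G)\times\partial_H(y)\bigr)$, so
\[
g_{(x,y)}(G\boxtimes H)\le\bigl|\partial_G(x)\times V(H)\bigr|+\bigl|V(G)\times\partial_H(y)\bigr|=g_x(G)|V(H)|+|V(G)|g_y(H).
\]
\end{proof}
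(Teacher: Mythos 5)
Your proof is correct and follows exactly the route the paper intends: the paper states this result is ``immediately deduced from Proposition~\ref{products}'', and your argument is precisely that deduction, taking cardinalities of the equality and inclusions there via Corollary~\ref{corolario}. Nothing is missing and nothing differs in substance.
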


\section{Conclusions}

In this note we describe the minimum $x$-geodominating set, for any vertex $x$ of a graph $G$, in terms of the boundary condition. With this description, it is trivial to deduce some properties of $g_x$-sets and parameter $g_x(G)$ and also it is easy to calculate exact values for different graphs. Our main theorem describes the truly essence of the boundary vertices as a geodetic set, showing the exact property that allows to reconstruct the graph by means of the interval operation, using boundary vertices. Finally we would like to point out the computational side of the problem. The calculation of the geodetic number is an NP-hard problem for general graphs (see \cite{A}), and the parameter $g_x(G)$ is an alternative, polynomially computable. The obvious relationship between the geodetic number of a graphs $g(G)$ and $g_x(G)$, gives that $g(G)\leq \min \{g_x(G)\colon x\in V(G)\} +1$. Finally, having in mind the boundary condition, the cardinal of the smallest $g_x$-set of a graph $G$ can be computed in time $\Theta(n^2)$. This provides, in polynomial time, a geodetic set (not minimum in most cases), but that rebuilds the graph with an interval operation with lower computational cost that the general one.

\end{document}